\newcommand{\val}{\mathrm{val}}
\newcommand{\Qp}{\mathbf{Q}_p}
\newcommand{\Qpn}{\mathbf{Q}_{p^n}}
\newcommand{\Zp}{\mathbf{Z}_p}
\newcommand{\Fp}{\mathbf{F}_p}
\newcommand{\Fpn}{\mathbf{F}_{p^n}}
\newcommand{\Cp}{\mathbf{C}_p}
\newcommand{\ZZ}{\mathbf{Z}}
\newcommand{\OO}{\mathcal{O}}
\newcommand{\MM}{\mathfrak{m}}
\newcommand{\Fpbar}{\overline{\mathbf{F}}_p}
\newcommand{\Qpbar}{\overline{\mathbf{Q}}_p}
\newcommand{\eps}{\varepsilon}
\renewcommand{\phi}{\varphi}
\renewcommand{\projlim}{\varprojlim}
\renewcommand{\geq}{\geqslant}
\renewcommand{\leq}{\leqslant} 
\newcommand{\G}{\mathrm{G}}
\newcommand{\B}{\mathrm{B}}
\newcommand{\K}{\mathrm{K}}
\newcommand{\Z}{\mathrm{Z}}
\newcommand{\gal}{\mathcal{G}_{\Qp}}
\newcommand{\galpn}{\mathcal{G}_{\Qpn}}
\newcommand{\inert}{\mathcal{I}_{\Qp}}
\newcommand{\et}{\widetilde{\mathbf{E}}}
\newcommand{\ddiese}{\mathrm{D}^{\sharp}}
\newcommand{\dfont}{\mathrm{D}}
\newcommand{\ind}{\mathrm{ind}}
\newcommand{\Sym}{\mathrm{Sym}}
\newcommand{\Mat}{\mathrm{Mat}}
\newcommand{\smat}[1]{\left( \begin{smallmatrix} #1 \end{smallmatrix} \right)}
\newcommand{\pmat}[1]{\begin{pmatrix} #1 \end{pmatrix}}
\newcommand{\dpar}[1]{(\!( #1 )\!)}
\newcommand{\dcroc}[1]{[\![ #1 ]\!]}
\author{Laurent Berger}
\address{Universit\'e de Lyon \\
UMPA ENS Lyon \\
46 all\'ee d'Italie \\
69007 Lyon \\
France}
\email{laurent.berger@umpa.ens-lyon.fr}
\urladdr{www.umpa.ens-lyon.fr/\~{}lberger/}
\date{Septembre 2008}
\title{Repr\'esentations supersinguli\`eres de 
$\mathrm{GL}_2(\mathbf{Q}_p)$ et 
$(\varphi,\Gamma)$-modules}
\subjclass{11F33, 11F80, 11F85, 22E50}
\begin{document}

\begin{abstract}
L'objet de cette note est de donner une démonstration directe du fait que si l'on applique le foncteur de Colmez à une $\overline{\mathbf{F}}_p$-représentation irréductible de dimension deux de $\mathrm{Gal}(\overline{\mathbf{Q}}_p / \mathbf{Q}_p)$, alors on trouve la restriction au sous-groupe de Borel de $\mathrm{GL}_2(\mathbf{Q}_p)$ d'une représentation supersingulière.
\end{abstract}

\begin{altabstract}
The purpose of this note is to give a direct proof of the fact that if one applies Colmez' functor to a two dimensional irreducible $\overline{\mathbf{F}}_p$-representation of $\mathrm{Gal}(\overline{\mathbf{Q}}_p / \mathbf{Q}_p)$, one gets the restriction to the Borel subgroup of $\mathrm{GL}_2(\mathbf{Q}_p)$ of a supersingular representation.
\end{altabstract} 

\maketitle

\setcounter{tocdepth}{2}
\tableofcontents

\setlength{\baselineskip}{18pt}

\section*{Introduction et notations}\label{secintro}

Cette note s'inscrit dans le cadre de la correspondance de Langlands $p$-adique et est un complément à \cite{LB5}. L'objet de ce dernier article est de démontrer la compatibilité à la réduction modulo $p$ de la {\og correspondance de Langlands $p$-adique \fg} définie par Breuil, en utilisant la réalisation découverte par Colmez de cette correspondance via les $(\phi,\Gamma)$-modules. La démonstration donnée dans \cite{LB5} est directe pour les représentations galoisiennes qui sont somme de deux caractères (quand la représentation côté $\mathrm{GL}_2$ est une somme directe de deux induites paraboliques), mais utilise un chemin assez détourné dans le cas d'une représentation galoisienne irréductible (quand la représentation côté $\mathrm{GL}_2$ est une supersingulière). L'objet de cette note est de donner une démonstration directe dans ce dernier cas. Remarquons qu'un sous-produit des calculs de \cite{LB5} est le fait que les restrictions au Borel des supersingulières restent irréductibles. Depuis, Pa\v{s}k\={u}nas a donné dans \cite{P06} une démonstration directe de ce fait et dans cette note, nous utilisons de manière essentielle le résultat de Pa\v{s}k\={u}nas. Pour garder à l'introduction une longueur raisonnable, et comme cette note fait directement suite à \cite{LB5}, nous renvoyons à ce dernier article pour les notations utilisées dans l'énoncé de notre résultat principal :

\begin{enonce*}{Théorème A}
Si $r \in \{0,\hdots,p-1\}$ et si $\chi$ est un caractère de $\Qp^\times$, alors on a un isomorphisme de représentations de $\mathrm{B}_2(\Qp)$ : $(\projlim_{\psi} \ddiese(\rho(r,\chi)))^\ast \simeq \pi(r,0,\chi)$.
\end{enonce*}

Signalons que ce théorème suit aussi des constructions très générales de Colmez dans \cite{CGL} où il est redémontré mais que la démonstration de Colmez consiste à construire l'inverse du foncteur $W \mapsto \projlim_{\psi} \ddiese(W)$ et à l'appliquer à $\pi(r,0,\chi)$ ce qui est \emph{a priori} assez différent de nos calculs.

Enfin, le lecteur que cela intéresse pourra appliquer les méthodes de cette note au cas des représentations galoisiennes de dimension $2$ qui sont sommes de deux caractères et retrouver la correspondance avec les sommes de $\pi(r,\lambda,\chi)$, ce qui consiste à redémontrer la correspondance dans ce cas-là en ne passant plus par les induites paraboliques et donc en évitant l'utilisation de la projection stéréographique.

Rappelons à présent certaines des notations qui sont utilisées dans cette note. La lettre $k$ désigne une extension finie de $\Fp$ qui est le corps des coefficients de toutes les représentations que l'on considère. On note $\omega$ le caractère cyclotomique modulo $p$ et $\mu_\lambda$ le caractère non-ramifié de $\gal$ qui envoie le frobenius arithmétique sur $\lambda^{-1}$. Si $W$ est une représentation $k$-linéaire de $\gal$, on note $\dfont(W)$ le $(\phi,\Gamma)$-module sur $k\dpar{X}$ associé à $W$ par Fontaine dans \cite{F90} et $\ddiese(W)$ le $k\dcroc{X}$-réseau de $\dfont(W)$ construit par Colmez dans \cite{CPG}. On note $\G$ pour $\mathrm{GL}_2(\Qp)$ et $\B$ pour $\mathrm{B}_2(\Qp)$ et $\K$ pour $\mathrm{GL}_2(\Zp)$ et $\Z$ pour le centre de $\G$. On note $\omega$ et $\mu_\lambda$ les caractères de $\Qp^\times$ définis par $\omega(a)=a p^{-\val(a)}$ et $\mu_\lambda(a) = \lambda^{\val(a)}$. 

\section{Quelques $(\phi,\Gamma)$-modules en caractéristique $p$}
\label{pgcp}

Si $n$ est un entier $\geq 1$, alors on note $\omega_n$ le caractère fondamental de Serre de niveau $n$ qui peut être défini de la manière suivante : on choisit $\pi_n \in \Qpbar$ tel que $\pi_n^{p^n-1} = -p$ et si $g\in \inert$, alors on pose $\omega_n(g) = g(\pi_n)/\pi_n \in \Fpbar^\times$; cette définition ne dépend pas du choix de $\pi_n$ et montre que $\omega_n$ s'étend à $\galpn$. Certains auteurs prennent plutôt pour $\pi_n$ une racine de $\pi_n^{p^n-1} = p$; cela ne change pas $\omega_n |_{\inert}$ mais notre définition a l'avantage que pour $n=1$, on a $\omega_1 = \omega$ sur $\gal$ tout entier.

Afin de décrire les $(\phi,\Gamma)$-modules associés aux représentations irréductibles en caractéristique $p$, nous devons donner une construction {\og en caractéristique $p$ \fg} de $\omega_n$. Pour cela, nous utilisons le corps $\et$ (défini dans \cite{F90}) qui intervient dans la construction des $(\phi,\Gamma)$-modules. C'est un corps algébriquement clos muni d'une action de $\gal$ et qui contient $\Fp\dpar{X}$; en particulier, il existe $Y \in \et$ tel que $Y^{(p^n-1)/(p-1)} = X$. On pose $f_g(X) = \omega(g)X/g(X)$ pour $g \in \gal$; cette série ne dépend que de l'image de $g$ dans $\Gamma$. Comme $f_g(X) \in 1 + X \Fp\dcroc{X}$ l'expression $f_g^s(X)$ a bien un sens si $s \in \Zp$.

\begin{lemm}\label{yon}
Si $g \in \galpn$ alors $g(Y) = Y \omega_n^p(g) f_g^{-\frac{p-1}{p^n-1}}(X)$. 
\end{lemm}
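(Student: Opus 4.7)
The plan is to raise both sides to the power $m := (p^n-1)/(p-1)$, thereby reducing to the known formula for $g(X)$, and then resolve the remaining $m$-th-root-of-unity ambiguity.

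On the left, $g(Y)^m = g(Y^m) = g(X) = \omega(g) X / f_g(X)$ by the definition of $f_g$. On the right,
\[
\bigl(Y \, \omega_n^p(g) \, f_g^{-(p-1)/(p^n-1)}(X)\bigr)^m = X \cdot \omega_n^{pm}(g) \cdot f_g(X)^{-1},
\]
where the fractional power of $f_g \in 1 + X\Fp\dcroc{X}$ is given by the $\Zp$-module structure on $1 + X\Fp\dcroc{X}$. These two match provided $\omega_n^{pm} = \omega$. For this, I set $\pi_1 := \pi_n^m$, so that $\pi_1^{p-1} = \pi_n^{p^n-1} = -p$ and therefore $\omega(g) = g(\pi_1)/\pi_1 = \omega_n(g)^m$, i.e.\ $\omega = \omega_n^m$; combined with $\omega_n^{p^n-1} = 1$ and the numerical identity $pm - m = p^n - 1$, this gives $\omega_n^{pm} = \omega_n^m \cdot \omega_n^{p^n-1} = \omega$, as required.

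Define $\zeta_g := (g(Y)/Y) \cdot \omega_n^p(g)^{-1} \cdot f_g(X)^{(p-1)/(p^n-1)}$. By the previous paragraph, $\zeta_g^m = 1$, so $\zeta_g \in \mu_m \subset \Fpn^\times$. A straightforward cocycle computation, using $(gh)(Y)/Y = (g(Y)/Y) \cdot g(h(Y)/Y)$, the multiplicativity $f_{gh} = f_g \cdot g(f_h)$, and the triviality of the $\galpn$-action on $\Fpn$ (which contains both $\mu_m$ and the image of $\omega_n^p$), shows that $g \mapsto \zeta_g$ is a character $\galpn \to \mu_m$.

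The main obstacle — the real content of the lemma — is to show that this character is trivial, since the preceding steps pin it down only modulo $\mu_m$. I would do this by specializing $Y$ to an explicit element of $\et^+$ coming, through the construction of $\et$ as a projective limit of $p$-power roots, from a compatible sequence $(y^{(k)})_k$ in $\OO_{\Cp}$ with $(y^{(k)})^p = y^{(k-1)}$ and $(y^{(0)})^m$ congruent modulo $p$ to $\zeta_p - 1$, then relating $y^{(0)}$ (up to units fixed by $\galpn$) to $\pi_n$. The action of a topological generator of the tame quotient of $\galpn$ on such a $y^{(0)}$ can be computed directly in terms of $\pi_n$, and matching the resulting root of unity against $\omega_n^p(g)$ — which is precisely where the exponent $p$, rather than some other power of $\omega_n$, becomes forced — gives $\zeta_g = 1$ and completes the proof.
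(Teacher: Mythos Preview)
Steps 1--3 are correct: raising to the $m$-th power and the cocycle check do show that $g(Y) = Y\,\eta_g\, f_g^{-1/m}(X)$ for some homomorphism $\eta : \galpn \to \Fpn^\times$ with $\eta^m = \omega$, so the lemma reduces to identifying $\eta = \omega_n^p$. But this reduction buys essentially nothing, because step~4 --- which you rightly call ``the real content of the lemma'' --- is only sketched, and the sketch is wrong at precisely the point that matters. You propose to relate $y^{(0)}$ to $\pi_n$ up to units fixed by $\galpn$; that would yield $g(y^{(0)})/y^{(0)} \equiv \omega_n(g)$ modulo $\MM_{\Cp}$, not $\omega_n^p(g)$. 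The claim ``$(y^{(0)})^m \equiv \zeta_p - 1 \pmod p$'' is also false: $(y^{(0)})^m = X^{(0)}$ has $p$-adic valuation $p/(p-1) > 1$, hence is $\equiv 0 \pmod p$. It is $y^{(1)}$, not $y^{(0)}$, that has the same valuation as $\pi_n$, and the passage from $y^{(1)}$ to $y^{(0)} = (y^{(1)})^p$ is exactly what produces the exponent $p$.

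The paper's proof is a careful execution of what your step~4 would have to be. It builds $Y$ from a compatible system $\pi_{n,j}$ of $m$-th roots of $\zeta_{p^j}-1$ via $y^{(i)} = \lim_j \pi_{n,i+j}^{p^j}$, computes $g(\pi_{n,j})/\pi_{n,j}$ in terms of characters $\omega_{n,j}$ satisfying $\omega_{n,j+1}^p = \omega_{n,j}$ and $\omega_{n,1} = \omega_n$, and then reads off that the extra $p$-th power in the limit formula forces $\omega_{n,j}^{p^j} = \omega_n^p$. Without carrying out this computation (or an equivalent one) your character $\zeta_g$ cannot be shown to be trivial, so the proposal as it stands has a genuine gap.
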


\begin{proof}
Rappelons que l'élément $X \in \et^+ = \projlim \OO_{\Cp}$ vaut $1-\eps$ où $\eps=(\zeta_{p^j})_{j \geq 0}$ et où $\{\zeta_{p^j}\}_{j \geq 0}$ est une suite compatible. Si $j \geq 1$, choisissons $\pi_{n,j} \in \OO_{\Cp}$ tel que :
\[ \pi_{n,j}^{\frac{p^n-1}{p-1}} = \zeta_{p^j}-1 \]
Si $g \in \galpn$, alors $g(\zeta_{p^j}-1)=\omega(g) (\zeta_{p^j}-1)f_g^{-1}(\zeta_{p^j}-1)$ et donc il existe $\omega_{n,j}(g) \in \Fpn^\times$ tel que :
\[ \frac{g(\pi_{n,j})}{\pi_{n,j}} = [\omega_{n,j}(g)] f_g^{-\frac{p-1}{p^n-1}}(\zeta_{p^j}-1), \]
où $[\cdot]$ dénote le relèvement de Teichmüller. L'application qui à $g$ associe $\omega_{n,j}(g)$ est un caractère de $\galpn$ qui ne dépend pas du choix de $\pi_{n,j}$. De plus on a :
\[ \begin{cases} (\zeta_{p^{j+1}}-1)^p=(\zeta_{p^j}-1) \cdot (1+O(p)) & \text{si $j \geq 1$} \\ 
(\zeta_p-1)^{p-1} = -p \cdot (1+O(\zeta_p-1))
\end{cases} \]
ce qui fait que $\omega_{n,j+1}^p=\omega_{n,j}$ si $j \geq 1$ et $\omega_{n,1} = \omega_n$. On en déduit aussi que l'on peut choisir les $\pi_{n,j}$ de telle manière que $\pi_{n,j+1}^p / \pi_{n,j} \in 1 + \MM_{\Cp}$. Si l'on écrit $Y=(y^{(i)}) \in \projlim \OO_{\Cp}$, alors on a $y^{(i)} = \lim_{j \to +\infty} \pi_{n,i+j}^{p^j}$ puisque les $\pi_{n,j}$ sont compatibles en ce sens que $\pi_{n,j+1}^p / \pi_{n,j} \in 1 + \MM_{\Cp}$ ce qui fait que si $g \in \galpn$, alors :
\[ \frac{g(y^{(i)})}{y^{(i)}} = [\omega_{n,i}(g)] \cdot \lim_{j \to + \infty} (f_g^{-\frac{p-1}{p^n-1}}(\zeta_{p^{i+j}}-1))^{p^j} \]
et donc que l'on a bien $g(Y)=Y \omega_n^p(g) f_g^{-\frac{p-1}{p^n-1}}(X)$ dans $\et$.
\end{proof}

On se donne à présent $1 \leq h \leq p^n-2$ et on suppose qu'il n'existe pas d'entier $r$ divisant $n$ tel que $h$ est un multiple de $(p^n-1)/(p^r-1)$. Cela revient à dire que si l'on écrit $h=h_0h_1 \hdots h_{n-1}$ en base $p$, alors l'application $i \mapsto h_i$ de $\ZZ/n\ZZ$ dans $\{0,\hdots,p-1\}$ n'admet pas de période plus petite que $n$. Dans ce cas, les caractères $\omega_n^h, \omega_n^{ph}, \hdots, \omega_n^{p^{n-1} h}$ de $\inert$ sont deux-à-deux distincts et il existe une unique représentation irréductible de $\gal$ que l'on note $\ind(\omega_n^h)$, dont le déterminant est $\omega^h$ et dont la restriction à $\inert$ est $\omega_n^h \oplus \omega_n^{ph} \oplus \cdots \oplus \omega_n^{p^{n-1} h}$. Toute représentation irréductible de dimension $n$ de $\gal$ est isomorphe à $\ind(\omega_n^h) \otimes \chi$ pour un entier $1 \leq h \leq p^n-2$ et un caractère $\chi : \gal \to k^\times$. 

\begin{prop}\label{pgomegan}
Le $(\phi,\Gamma)$-module $\dfont(\ind(\omega_n^h))$ est défini sur $\Fp\dpar{X}$ et admet une base $e_0, \hdots, e_{n-1}$ dans laquelle dans laquelle $\gamma(e_j) =  f_\gamma(X) ^{hp^j(p-1)/(p^n-1)} e_j$ si $\gamma \in \Gamma$ et $\phi(e_j) = e_{j+1}$ pour $0 \leq j \leq n-2$ et $\phi(e_{n-1}) = (-1)^{n-1} X^{-h(p-1)} e_0$.
\end{prop}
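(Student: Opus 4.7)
The plan has two parts: verify that the given formulas define an \'etale $(\phi,\Gamma)$-module $D$ over $\Fp\dpar{X}$, and then identify $D$ with $\dfont(\ind(\omega_n^h))$. Part 1 is routine. For $0 \leq j \leq n-2$, the commutation $\phi\gamma(e_j) = \gamma\phi(e_j)$ is immediate using the characteristic-$p$ identity $\phi(f_\gamma(X)) = f_\gamma(X^p) = f_\gamma(X)^p$. For $j = n-1$, one computes $\gamma(X^{-h(p-1)}) = X^{-h(p-1)} f_\gamma(X)^{h(p-1)}$ using $\gamma(X) = \omega(\gamma) X / f_\gamma(X)$ together with $\omega(\gamma)^{p-1} = 1$ in $\Fp^\times$; both $\phi\gamma(e_{n-1})$ and $\gamma\phi(e_{n-1})$ then collapse to $(-1)^{n-1} f_\gamma^{hp^n(p-1)/(p^n-1)} X^{-h(p-1)} e_0$. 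The matrix of $\phi$ has determinant $X^{-h(p-1)}$, a unit in $\Fp\dpar{X}$, so $D$ is \'etale; this also matches $\dfont(\omega^h) = \dfont(\det \ind(\omega_n^h))$ as the determinant module.

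For the identification, I would compute $V(D) = (\et \otimes D)^{\phi = 1}$ directly. Writing a generic element as $v = \sum_j x_j \otimes e_j$, the condition $\phi(v) = v$ forces $x_j = \phi^j(x_0)$ with $\phi^n(x_0) = (-1)^{n-1} X^{h(p-1)} x_0$. The element $Y$ of Lemma~\ref{yon} satisfies $\phi(Y) = Y^p$ and $Y^{(p^n-1)/(p-1)} = X$; substituting $x_0 = Y^h \alpha$ reduces the equation to $\phi^n(\alpha) = (-1)^{n-1} \alpha$, whose solutions form an $\Fp$-subspace of $\et$ of dimension $n$ realized inside $\Fpbar$.

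For the Galois action, for $g \in \galpn$ with image $\gamma \in \Gamma$ one uses $\gamma(e_0) = f_g(X)^{h(p-1)/(p^n-1)} e_0$ while Lemma~\ref{yon} gives $g(Y^h) = Y^h \omega_n^{ph}(g) f_g(X)^{-h(p-1)/(p^n-1)}$: the two $f_g$-factors cancel and the action on the solution parameter becomes $\alpha \mapsto \omega_n^{ph}(g) g(\alpha)$. After extension of scalars to $\Fpbar$, the $\Fpn$-multiplication decomposes the solution space into $n$ eigenlines, on which inertia acts by the distinct characters $\omega_n^{p^{i+1}h}$ for $i = 0, \ldots, n-1$. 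This reproduces exactly the inertial type of $\ind(\omega_n^h)$ and, combined with $\det V(D) = \omega^h$, it identifies $V(D)$ with $\ind(\omega_n^h)$.

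The main obstacle is the sign $(-1)^{n-1}$ appearing in $\phi(e_{n-1})$: for $n$ even the solution space $\et^{\phi^n = -1}$ lies not in $\Fpn$ but in a non-trivial coset of $\Fpn^\times$ inside $\mathbf{F}_{p^{2n}}^\times$, so one needs a small additional argument to verify that this space retains $\Fp$-dimension $n$ and splits correctly into inertial eigenlines. This sign ultimately encodes the determinant of the cyclic permutation induced by a Frobenius lift on the basis of $\ind(\omega_n^h)$, making the computation consistent with the overall determinant $\omega^h$.
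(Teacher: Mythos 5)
Votre preuve est correcte et suit essentiellement la même route que celle du texte : les deux identifient la représentation par son déterminant et sa restriction à l'inertie, et les deux calculent les points fixes de $\phi$ sur $\et \otimes \dfont(\ind(\omega_n^h))$ au moyen de l'élément $Y$ du lemme \ref{yon} et d'une racine $\alpha$ de $\alpha^{p^n-1}=(-1)^{n-1}$. La seule différence est d'organisation — le texte tensorise d'abord par $\Fpn$ et exhibe explicitement les $n$ vecteurs propres inertiels $v_j$, tandis que vous résolvez la récurrence $\phi=1$ sur $\Fp$ puis décomposez en droites propres ; le problème de signe que vous signalez est traité dans le texte par le même choix de $\alpha$ et est effectivement anodin.
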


\begin{proof} 
Soit $W$ la représentation associée au $(\phi,\Gamma)$-module décrit dans la proposition. Si l'on pose $f = X^h e_0 \wedge \hdots \wedge e_{n-1}$, alors on a $\phi(f)=f$ et $\gamma(f) = \omega(\gamma)^h f$ ce qui fait que le déterminant de $W$ est bien $\omega^h$ et il suffit donc de montrer que la restriction de $\Fpn \otimes_{\Fp} W$ à $\inert$ se décompose en $\omega_n^h \oplus \omega_n^{ph} \oplus \cdots \oplus \omega_n^{p^{n-1} h}$. 

Si l'on décompose $\Fpn \otimes_{\Fp} \et$ en $\prod_{k=0}^{n-1} \et$ via l'application $x \otimes y \mapsto (\sigma^k(x)y)$ où $\sigma$ est le frobenius de $\Fpn$, alors pour $(x_0,\hdots,x_{n-1}) \in \prod_{k=0}^{n-1} \et$, on a les formules :
\begin{align*}
\phi((x_0,\hdots,x_{n-1})) & =(\phi(x_{n-1}),\phi(x_0),\hdots,\phi(x_{n-2})) \\ 
g((x_0,\hdots,x_{n-1}))& =(g(x_0),\hdots,g(x_{n-1})),
\end{align*} 
si $g \in \galpn$ (mais pas si $g \in \gal$). On choisit $\alpha \in \et$ tel que $\alpha^{p^n-1} = (-1)^{n-1}$ et on pose :
\begin{align*}
v_0 & = (\alpha Y^h,0,\hdots,0) \cdot e_0 + (0, \alpha^p Y^{ph}, \hdots,0) \cdot e_1 + \cdots (0,\hdots,0,\alpha^{p^{n-1}} Y^{p^{n-1}h}) \cdot e_{n-1} \\
v_1 & = (0,\alpha Y^h,\hdots,0) \cdot e_0 + (0,0,  \alpha^p Y^{ph}, \hdots,0) \cdot e_1 + \cdots (\alpha^{p^{n-1}} Y^{p^{n-1}h},0,\hdots,0) \cdot e_{n-1} \\
 & \vdots \\
v_{n-1} &= (0,\hdots,0,\alpha Y^h) \cdot e_0 + (\alpha^p Y^{ph},0, \hdots,0) \cdot e_1 + \cdots (0,\hdots,0,\alpha^{p^{n-1}} Y^{p^{n-1}h},0) \cdot e_{n-1} 
\end{align*}
On vérifie que les vecteurs $v_0,\hdots,v_{n-1}$ forment une base de $\Fpn \otimes_{\Fp} (\et \otimes_{\Fp\dpar{X}} \dfont(W))$. Les formules qui donnent l'action de $\phi$ impliquent que $\phi(v_j)=v_j$ ce qui fait que $v_j \in \Fpn \otimes_{\Fp} W$. Enfin, les formules qui donnent l'action de $\Gamma$ et le lemme \ref{yon} impliquent que $g(v_j) = \omega_n^{hp^{1-j}} v_j$ ce qui achève la démonstration.
\end{proof}

\section{Représentations irréductibles de dimension $2$}

On fixe à présent $r \in \{0,\hdots,p-1\}$ et $\chi = \omega^s \mu_\lambda$ un caractère de $\gal$ et on pose $W = \rho(r,\chi) = (\ind(\omega_2^{r+1})) \otimes \chi$. On sait que toute $k$-représentation de dimension $2$ de $\gal$ est isomorphe à une telle représentation. Comme corollaire immédiat de la proposition \ref{pgomegan}, on trouve le résultat suivant.

\begin{prop}\label{pgmindom}
Le $(\phi,\Gamma)$-module $\dfont(W)$ admet une base $e,f$ dans laquelle :
\[ \Mat(\phi) = \begin{pmatrix} 0 & -X^{-(r+1)(p-1)} \\ 1 & 0 \end{pmatrix} \cdot \lambda
\quad\text{et}\quad
\Mat(\gamma) = \begin{pmatrix} f_\gamma(X) ^{\frac{r+1}{p+1}} & 0 \\ 0 &   f_\gamma(X) ^{\frac{p(r+1)}{p+1}} \end{pmatrix} \cdot \omega(\gamma)^s. \]
\end{prop}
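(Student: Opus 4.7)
The plan is to derive the formulas directly from Proposition \ref{pgomegan} applied with $n=2$ and $h=r+1$, and then to twist by $\chi$. Specializing Proposition \ref{pgomegan} to these parameters provides a basis $e_0, e_1$ of $\dfont(\ind(\omega_2^{r+1}))$ in which $\phi(e_0) = e_1$, $\phi(e_1) = -X^{-(r+1)(p-1)} e_0$, and $\gamma$ acts diagonally with eigenvalues $f_\gamma(X)^{(r+1)/(p+1)}$ and $f_\gamma(X)^{p(r+1)/(p+1)}$, via the identities $(r+1)(p-1)/(p^2-1) = (r+1)/(p+1)$ and $(r+1)p(p-1)/(p^2-1) = p(r+1)/(p+1)$. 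I would also verify that the irreducibility hypothesis of Proposition \ref{pgomegan} holds, which reduces to noting that $r+1 \in \{1,\hdots,p\}$ is never a multiple of $(p^2-1)/(p-1) = p+1$.

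Next I would compute a convenient basis of $\dfont(\chi)$, using that $\dfont$ is compatible with tensor products. For the unramified factor $\mu_\lambda$ the standard basis vector has $\phi$ acting as multiplication by $\lambda$ and $\gamma$ acting trivially. For the $\omega^s$ factor, one can apply Proposition \ref{pgomegan} with $n=1$ (the irreducibility condition being vacuous, and $\omega_1 = \omega$), which yields a basis on which $\phi$ acts as $X^{-s(p-1)}$ and $\gamma$ as $f_\gamma(X)^s$. Rescaling this basis vector by $X^s$ and using the identity $\gamma(X) = \omega(\gamma) X / f_\gamma(X)$ (already implicit in the proof of Proposition \ref{pgomegan}) converts these eigenvalues to $1$ and $\omega(\gamma)^s$. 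Multiplying by the $\mu_\lambda$-basis then produces a basis $w_\chi$ of $\dfont(\chi)$ on which $\phi$ acts as $\lambda$ and $\gamma$ as $\omega(\gamma)^s$.

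To finish, I take $e = e_0 \otimes w_\chi$ and $f = e_1 \otimes w_\chi$ as a basis of $\dfont(W) = \dfont(\ind(\omega_2^{r+1})) \otimes_{k\dpar{X}} \dfont(\chi)$. Reading off the action of $\phi$ and $\gamma$ from the tensor product formulas yields exactly the matrices in the statement: the scalar $\lambda$ comes out as a factor of $\Mat(\phi)$, and the scalar $\omega(\gamma)^s$ comes out as a factor of $\Mat(\gamma)$. No serious obstacle arises; the only non-automatic point is the small rescaling needed on the $\omega^s$-factor, which is why the author can reasonably present this proposition as an immediate corollary of Proposition \ref{pgomegan}.
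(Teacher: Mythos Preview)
Your proposal is correct and matches the paper's approach: the paper states this proposition as an ``immediate corollary'' of Proposition~\ref{pgomegan} without further argument, and what you have written is exactly the natural way to unpack that claim --- specialize to $n=2$, $h=r+1$, check the exponent simplifications and the irreducibility hypothesis, then twist by a basis of $\dfont(\chi)$ on which $(\phi,\gamma)$ act by $(\lambda,\omega(\gamma)^s)$. The only step you add beyond what the paper makes explicit is the rescaling by $X^s$ on the $\omega^s$-factor to turn $(\phi,\gamma)=(X^{-s(p-1)},f_\gamma^s)$ into $(1,\omega(\gamma)^s)$, which is standard and works as you say since $\gamma(X)=\omega(\gamma)X/f_\gamma(X)$.
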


\begin{coro}\label{actpsi}
L'action de l'opérateur $\psi$ sur $\dfont(W)$ est donnée par : 
\[ \psi(\alpha e + \beta f) = \psi(\beta) \lambda^{-1} e - 
\psi(X^{(r+1)(p-1)} \alpha) \lambda^{-1} f. \]
\end{coro}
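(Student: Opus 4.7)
The plan is to exploit the defining property of $\psi$ as a left inverse of $\phi$, namely that $\psi(a \phi(y)) = \psi(a) \cdot y$ for every $a \in \Fp\dpar{X}$ and $y \in \dfont(W)$. The strategy is therefore to rewrite an arbitrary element $\alpha e + \beta f$ of $\dfont(W)$ in the form $a_1 \phi(e) + a_2 \phi(f)$ with $a_1, a_2 \in \Fp\dpar{X}$, and then apply $\psi$ termwise.

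First, I read off from the matrix of $\phi$ given in Proposition \ref{pgmindom} the explicit formulas
\[ \phi(e) = \lambda f \quad \text{and} \quad \phi(f) = -\lambda X^{-(r+1)(p-1)} e. \]
Inverting these (which is legitimate since $\lambda \in k^\times$ and $X^{-(r+1)(p-1)} \in \Fp\dpar{X}^\times$) gives
\[ f = \lambda^{-1} \phi(e) \quad \text{and} \quad e = -\lambda^{-1} X^{(r+1)(p-1)} \phi(f). \]
Substituting these identities into $\alpha e + \beta f$ yields
\[ \alpha e + \beta f = -\lambda^{-1} \bigl( X^{(r+1)(p-1)} \alpha \bigr) \phi(f) + \lambda^{-1} \beta \, \phi(e). \]

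Applying $\psi$ and using the semilinearity relation $\psi(a \phi(y)) = \psi(a) y$ on each of the two terms gives
\[ \psi(\alpha e + \beta f) = \lambda^{-1} \psi(\beta)\, e - \lambda^{-1} \psi\bigl(X^{(r+1)(p-1)} \alpha\bigr) f, \]
which is precisely the stated formula. There is no real obstacle: the only mild point is to remember that the matrix of $\phi$ given in Proposition \ref{pgmindom} is the one whose columns give the images $\phi(e)$ and $\phi(f)$, so that the sign and the exponent $-(r+1)(p-1)$ get transferred correctly when one inverts the relations.
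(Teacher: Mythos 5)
Votre démonstration est correcte et suit exactement la même démarche que celle du texte : on exprime $e$ et $f$ en fonction de $\phi(e)$ et $\phi(f)$ à partir de la matrice de la proposition \ref{pgmindom}, puis on applique la relation $\psi(a\phi(b))=\psi(a)b$. La convention que vous adoptez (les colonnes de $\Mat(\phi)$ donnent les images de $e$ et $f$) est bien celle du texte, comme le confirme la formule $\phi(e_{n-1})=(-1)^{n-1}X^{-h(p-1)}e_0$ de la proposition \ref{pgomegan}.
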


\begin{proof}
On a $\psi(\alpha e + \beta f) = \psi(- \alpha  X^{(r+1)(p-1)} \lambda^{-1} \phi(f) + \beta \lambda^{-1} \phi(e))$ et le corollaire suit alors du fait que $\psi(a\phi(b))  = \psi(a)b$.
\end{proof}

\begin{prop}\label{ddindom}
Le treillis de Colmez $\ddiese(W)$ est donné par :
\[ \ddiese(W) = k\dcroc{X} \cdot e \oplus X^r k\dcroc{X} \cdot f. \] 
\end{prop}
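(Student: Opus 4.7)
The plan is to verify that $M := k\dcroc{X}\cdot e \oplus X^r k\dcroc{X}\cdot f$ coincides with $\ddiese(W)$, combining Corollary \ref{actpsi} with the characteristic-$p$ identity $\phi(X) = (1+X)^p - 1 = X^p$, which gives $\phi(X^r) = X^{pr}$.

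The first step is to verify that $M$ is stable under $\psi$. Given $\alpha e + \beta f \in M$, Corollary \ref{actpsi} displays the two components $\lambda^{-1}\psi(\beta)$ and $-\lambda^{-1}\psi(X^{(r+1)(p-1)}\alpha)$. The first lies in $k\dcroc{X}$ because $\psi$ preserves $k\dcroc{X}$. For the second, I would write $(r+1)(p-1) = pr + (p-1-r)$ so that $X^{(r+1)(p-1)} = \phi(X^r)\cdot X^{p-1-r}$, and invoke the relation $\psi(\phi(a)b) = a\psi(b)$ to obtain
\[ \psi\bigl(X^{(r+1)(p-1)}\alpha\bigr) = X^r\cdot \psi\bigl(X^{p-1-r}\alpha\bigr) \in X^r k\dcroc{X}. \]

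The second step is to identify $M$ with $\ddiese(W)$. Starting from the obvious $\phi$-stable $k\dcroc{X}$-lattice $D^+ := k\dcroc{X}\cdot e \oplus k\dcroc{X}\cdot f$, the computation above shows $\psi(D^+) \subseteq M$; the reverse inclusion follows from the formulas $\psi(X^r \phi(u)) = (-1)^r u$ and $\psi(X^{(r+1)(p-1)} \phi(u)) = (-1)^{p-1-r} X^r u$ for $u \in k\dcroc{X}$, which realize each element of $M$ as a $\psi$-image of an element of $D^+$. Hence $\psi(D^+) = M$, and the same calculation gives $\psi(M) = M$, so the decreasing sequence $\psi^n(D^+)$ is constant equal to $M$ for $n \geq 1$. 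Invoking Colmez's description in \cite{CPG} of $\ddiese(W)$ as the stable value of this iteration then yields $\ddiese(W) = M$.

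The main obstacle lies in pinning down the correct characterization of $\ddiese$ from \cite{CPG} to invoke at the end: depending on the chosen formulation (stable value of $\psi$-iterates from an étale lattice, maximal $\psi$-stable compact sublattice, etc.), one may need to supplement the $\psi$-stability of $M$ with either the surjectivity of $\psi\colon M \to M$, which follows immediately from the formulas above, or a minimality argument. Once the correct characterization is matched, the whole proof reduces to the elementary identity $\phi(X^r) = X^{pr}$ together with the formula of Corollary \ref{actpsi}.
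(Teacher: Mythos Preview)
Your verification that $M$ is $\psi$-stable and that $\psi\colon M\to M$ is surjective is correct and is essentially the paper's computation, repackaged via the factorisation $X^{(r+1)(p-1)}=\phi(X^r)X^{p-1-r}$; the paper writes the same formula as $\psi(\alpha e+\beta X^r f)=\psi(\beta X^r)\lambda^{-1}e-\psi(\alpha X^{p-1-r})\lambda^{-1}X^r f$ and then uses $\psi(X^t)=(-1)^t$ for $0\leq t\leq p-1$ exactly as you do.

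There is, however, a factual slip and a genuine gap in the second half. First, $D^+=k\dcroc{X}e\oplus k\dcroc{X}f$ is \emph{not} $\phi$-stable: by Proposition~\ref{pgmindom} one has $\phi(f)=-\lambda X^{-(r+1)(p-1)}e\notin D^+$. This does not affect your computation of $\psi(D^+)=M$, but it removes the justification for treating $D^+$ as a canonical starting lattice for an iteration argument.

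Second, and more seriously, the properties you establish only give $M\subseteq\ddiese(W)$, since $\ddiese(W)$ is by definition the \emph{largest} $\psi$-stable lattice on which $\psi$ is surjective. To get the reverse inclusion via your iteration, you would need to know a priori that $\ddiese(W)\subseteq D^+$ (so that $\ddiese(W)=\psi(\ddiese(W))\subseteq\psi(D^+)=M$), and nothing you have written gives this. The ``stable value of the iteration'' description you invoke from \cite{CPG} is not stated precisely, and for an arbitrary lattice $L$ the limit $\bigcap_n\psi^n(L)$ need not equal $\ddiese(W)$. The paper closes this gap differently: it quotes Lemma~1.1.2 of \cite{LB5}, which says that when $W$ is irreducible there is a \emph{unique} $\psi$-stable lattice on which $\psi$ is surjective, so that the two properties you did verify already force $M=\ddiese(W)$. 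Your honest final paragraph anticipates exactly this difficulty but does not resolve it; the missing ingredient is precisely that uniqueness statement (or, if you insist on the iteration route, an independent proof that $\ddiese(W)\subseteq D^+$).
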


\begin{proof}
La définition de $\ddiese$ est donnée dans le \S 2.4 de \cite{CPG}, c'est le plus grand $k\dcroc{X}$-réseau de $\dfont$ qui est stable par $\psi$ et sur lequel $\psi$ est surjectif. Par ailleurs, le lemme 1.1.2 de \cite{LB5} montre que dans notre cas, $\ddiese(W)$ est le seul $k\dcroc{X}$-réseau de $\dfont$ qui est stable par $\psi$ et sur lequel $\psi$ est surjectif et il suffit donc de vérifier que $M = k\dcroc{X} e \oplus X^r k\dcroc{X} f$ a ces deux propriétés. Le corollaire \ref{actpsi} nous donne la formule :
\[ \psi(\alpha e + \beta X^r f)= \psi(\beta X^r) \lambda^{-1} e - \psi(\alpha X^{p-1-r}) \lambda^{-1} X^r f \] 
ce qui fait que $M$ est stable par $\psi$. Si $0 \leq t \leq p-1$, alors $\psi(X^t)=(-1)^t$ et l'application $f(X) \mapsto \psi(X^t f(X))$ de $k\dcroc{X} \to k\dcroc{X}$ est donc surjective. Comme $r$ et $p-1-r$ sont compris entre $0$ et $p-1$, l'application $\psi$ est bien surjective sur $M$ et donc $\ddiese(W) = k\dcroc{X} e \oplus X^r k\dcroc{X}  f$.
\end{proof}

\section{Construction de représentations du Borel}

On conserve $W = \rho(r,\chi) = (\ind(\omega_2^{r+1})) \otimes \chi$ et on pose comme dans \cite{LB5} :
\[ \projlim_{\psi} \ddiese(W) = \text{$\{ y = (y_0,y_1,\hdots)$ avec $y_i \in \ddiese(W)$ tels que $\psi(y_{i+1}) = y_i$ pour tout $i\geq 0 \}$,} \] 
que l'on munit de l'action de $\B$ donnée par les formules suivantes :
\begin{align*}
\left( \begin{pmatrix} x & 0 \\ 0 & x \end{pmatrix} \star v \right)_i
& = (\omega^r \chi^2)^{-1}(x) v_i;  \\
\left( \begin{pmatrix} 1 &  0 \\ 0 & p^j  
\end{pmatrix} \star v \right)_i & = v_{i-j} = \psi^j(v_i); \\
\left( \begin{pmatrix} 1 &  0 \\ 0 & a  
\end{pmatrix} \star v \right)_i & = \gamma_a^{-1}(v_i), 
\text{ o\`u $\gamma_a \in \Gamma$ est tel que $\chi_{\mathrm{cycl}}(\gamma_a) = a$;}\\
\left( \begin{pmatrix} 1 &  z \\ 0 & 1  
\end{pmatrix} \star v \right)_i & = 
\psi^j((1+X)^{p^{i+j} z} v_{i+j}),\text{ pour $i+j \geq -{\rm val}(z)$.}
\end{align*}
On pose ensuite $\Omega(W) = (\projlim_{\psi} \ddiese(W))^*$ ce qui fait de $\Omega(W)$ une représentation lisse de $\B$ dont le caractère central est $\omega^r \chi^2$. Rappelons que par la proposition 1.2.2 de \cite{LB5}, la représentation $\Omega(W)$ est irréductible. Si $y = (y_0,y_1,\hdots)$, alors par la proposition \ref{ddindom} on peut écrire $y_i=\alpha_i e + \beta_i X^r f$ et on appelle $\theta \in \Omega(W)$ la forme linéaire qui à $y$ associe $\theta(y) = \alpha_0(0)$. 

\begin{lemm}\label{acbormu}
Si $\smat{ a & b \\ 0 & d} \in \B \cap \K \Z$, alors $\smat{ a & b \\ 0 & d} \star \theta = \chi(ad) \omega^r(a) \theta$.
\end{lemm}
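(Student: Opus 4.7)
L'id\'ee est de d\'ecomposer la matrice $\smat{a & b \\ 0 & d}$ en produit de trois matrices dont l'action sur $\projlim_{\psi} \ddiese(W)$ est d\'ecrite directement par les formules d\'efinissant l'action de $\B$. L'hypoth\`ese $\smat{a & b \\ 0 & d} \in \B \cap \K\Z$ \'equivaut \`a $\val(a) = \val(d)$ et $\val(b) \geq \val(a)$, de sorte que $d/a \in \Zp^\times$ et $b/a \in \Zp$, et l'on peut \'ecrire
\[ \smat{a & b \\ 0 & d} = \smat{a & 0 \\ 0 & a} \cdot \smat{1 & 0 \\ 0 & d/a} \cdot \smat{1 & b/a \\ 0 & 1}. \]
Il reste alors \`a calculer l'action sur $\theta$ de chacun des trois facteurs, en utilisant la formule $(g \star \theta)(y) = \theta(g^{-1} \star y)$.

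Pour le facteur central, les formules donnent imm\'ediatement $\smat{a & 0 \\ 0 & a} \star \theta = \omega^r(a) \chi^2(a) \theta$. Pour le facteur unipotent, comme $b/a \in \Zp$ on peut prendre $j=0$ dans la formule\string; la coordonn\'ee d'indice $0$ de $\smat{1 & b/a \\ 0 & 1}^{-1} \star y$ est alors $(1+X)^{-b/a} y_0$, dont le $e$-coefficient \'evalu\'e en $X=0$ vaut $1 \cdot \alpha_0(0) = \theta(y)$, d'o\`u $\smat{1 & b/a \\ 0 & 1} \star \theta = \theta$.

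Le calcul cl\'e est celui de $\smat{1 & 0 \\ 0 & u} \star \theta$ pour $u = d/a \in \Zp^\times$. On a $(\smat{1 & 0 \\ 0 & u}^{-1} \star y)_0 = \gamma_u(y_0)$, o\`u $\gamma_u \in \Gamma$ agit sur $\Fp\dcroc{X}$ par la substitution $X \mapsto (1+X)^u-1$, et agit sur $e$ par $\gamma_u(e) = f_{\gamma_u}(X)^{(r+1)/(p+1)} \omega(u)^s e$ d'apr\`es la proposition \ref{pgmindom}. Comme la substitution fixe $X=0$ et $f_{\gamma_u}(0)=1$, l'\'evaluation en $X=0$ du $e$-coefficient de $\gamma_u(\alpha_0 e + \beta_0 X^r f)$ vaut $\omega(u)^s \alpha_0(0) = \chi(u) \theta(y)$, d'o\`u $\smat{1 & 0 \\ 0 & d/a} \star \theta = \chi(d/a) \theta$. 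En combinant les trois actions\string:
\[ \smat{a & b \\ 0 & d} \star \theta = \omega^r(a) \chi^2(a) \chi(d/a) \theta = \chi(ad) \omega^r(a) \theta. \]

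L'obstacle principal se trouve dans l'\'etape pr\'ec\'edente\string: il faut reconna\^itre que dans l'action \emph{a priori} complexe de $\gamma_u$ sur $\dfont(W)$, seul le scalaire de Tate $\omega(u)^s$ subsiste apr\`es \'evaluation en $X=0$, gr\^ace au fait que $f_{\gamma_u}(0)=1$ et que l'id\'eal $(X)$ est stable par $\Gamma$\string; les deux autres facteurs ne pr\'esentent aucune difficult\'e.
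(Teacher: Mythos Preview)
Your proof is correct and follows essentially the same approach as the paper: factor the matrix, apply the explicit formulas for the $\B$-action, and observe that after evaluation at $X=0$ only the Tate twist $\omega^s$ survives because $f_{\gamma_u}(0)=1$. The only cosmetic difference is that you decompose $\smat{a & b \\ 0 & d}$ itself into three factors with unipotent parameter $b/a$, while the paper decomposes the inverse into two factors $\smat{a^{-1} & 0 \\ 0 & a^{-1}}\smat{1 & -bd^{-1} \\ 0 & ad^{-1}}$ and treats the unipotent and diagonal pieces together; both lead to the same short computation.
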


\begin{proof}
On a :
\begin{align*} 
\left(\pmat{ a & b \\ 0 & d} \star \theta\right)(y) 
& = \theta \left( \pmat{ a^{-1} & -b a^{-1} d^{-1} \\ 0 & d^{-1}} \star y \right) \\
& = \theta \left( \pmat{ a^{-1} & 0 \\ 0 & a^{-1}} \pmat{ 1 & -b d^{-1} \\ 0 & ad^{-1}} \star y \right) \\
& = \theta (  (\omega^r \chi^2)^{-1}(a^{-1}) \gamma_{ad^{-1}}^{-1}((1+X)^{-bd^{-1}} y)) \\
& = (\omega^r \chi^2)(a) \omega^s(a^{-1}d) \theta(y) \\
&=  \omega^r(a) \chi(ad) \theta(y),
\end{align*}
puisque $\mu_\lambda(a) = \mu_\lambda(d)$ comme $\smat{ a & b \\ 0 & d} \in \B \cap \K \Z$ ce qui fait que $\chi(a) = \chi(d) \omega^s(ad^{-1})$.
\end{proof}

Si $V$ est une représentation de $\B \cap \K \Z$, on note $\ind_{\B \cap \K \Z}^{\B} V$ l'induite à support comapct et on note $[b,v]$ (comme dans \cite{BL2} et \cite{BR1}) l'élément de $\ind_{\B \cap \K \Z}^{\B} V$ défini par $[b,v](g) = gb \cdot v$ si $gb \in \B \cap \K \Z$ et $[b,v](g) = 0$ sinon. On déduit du lemme ci-dessus un morphisme $\B \cap \K \Z$-équivariant de la représentation $(\omega^r \otimes 1) \otimes (\chi \circ \det)$ vers $\Omega(W)$ et par réciprocité de Frobenius, on en déduit l'existence d'un morphisme : 
\[ \pi_W : \ind_{\B \cap \K \Z}^{\B} (\omega^r \otimes 1) \otimes (\chi \circ \det) 
\to \Omega(W), \]
déterminé par $\pi_W([1,v_r]) = \theta$ où $v_r$ est une base de $(\omega^r \otimes 1) \otimes (\chi \circ \det)$.

\begin{prop}\label{heckesurnul}
Soit $J = \{ j_0,\hdots,j_{p-1}\}$ un ensemble d'éléments de $\Zp$ tels que $j_i = i \mod{p}$ pour tout $i$.
\begin{enumerate}
\item Si $r=0$, alors $\pi_W(\smat{1 & 0\\0 & p}  [1,v_0] + \sum_{j \in J} \smat{p & j \\ 0 & 1} [1,v_0])  = 0$; 
\item si $r \geq 1$, alors $\pi_W(\sum_{j \in J} (-j)^k \smat{p & j \\ 0 & 1} [1,v_r])=0$ pour tout $k \in \{ 0,\hdots,r-1\}$;
\item si $r \geq 1$ et si $\lambda_0,\hdots,\lambda_{p-1}$ sont tels que $\sum_{i=0}^{p-1} i^{\ell} \lambda_i = 0$ pour tout $0 \leq \ell \leq r-1$, alors $\pi_W( \sum_{i=0}^{p-1} \lambda_i i^r [1,v_r] + \sum_{i=0}^{p-1} \lambda_i \smat{1 & i \\ 0 & 1} \smat{1 & 0 \\ 0 & p^{-1}} \sum_{j \in J} (-j)^r \smat{p & j \\ 0 & 1} [1,v_r])=0$.
\end{enumerate}
\end{prop}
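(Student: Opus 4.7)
The plan is to use Frobenius reciprocity: for every $g \in \B$ and $y = (y_i) \in \projlim_\psi \ddiese(W)$ we have $\pi_W(g \cdot [1, v_r])(y) = \theta(g^{-1} \star y)$, and by Proposition~\ref{ddindom} we write $y_i = \alpha_i e + \beta_i X^r f$ so that $\theta(y) = \alpha_0(0)$. The central computation I need is for the Hecke matrix $\smat{p & j \\ 0 & 1}$: decomposing
\[ \smat{p & j \\ 0 & 1}^{-1} = \smat{1 & -j/p \\ 0 & 1} \, \smat{p^{-1} & 0 \\ 0 & p^{-1}} \, \smat{1 & 0 \\ 0 & p}, \]
the right factor sends $y_i$ to $\psi(y_i)$, the scalar multiplies by $(\omega^r\chi^2)(p) = \lambda^2$, and the unipotent factor at index $0$ is multiplication by $(1+X)^{-j/p}$ when $p \mid j$, or $\psi((1+X)^{-j}\cdot)$ applied at index $1$ when $p \nmid j$. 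Combining with Corollary~\ref{actpsi} together with the identity $c(\psi(g)) = \sum_{t=0}^{p-1}(-1)^t [X^t] g$ (where $c$ denotes the constant-term functional), and using $(1+X)^{-j} = \phi((1+X)^{-j/p})$ for $p \mid j$ to unify the two cases, I obtain
\[ \theta(\smat{p & j \\ 0 & 1}^{-1} \star y) = \lambda \cdot c\bigl(\psi((1+X)^{-j} \beta_0 X^r)\bigr). \]

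For part (1), the extra term gives $\pi_W(\smat{1 & 0 \\ 0 & p}[1, v_0])(y) = \alpha_1(0)$. Using $y_0 = \psi(y_1)$ and Corollary~\ref{actpsi} with $r = 0$ yields $\lambda\beta_0(0) = -\alpha_1(0)$ (via $[X^0]\psi(X^{p-1}\alpha_1) = \alpha_1(0)$). Meanwhile, summing the above over $J = \{0,1,\ldots,p-1\}$ reduces through the identities $\sum_{j=0}^{p-1}\binom{-j}{l} \equiv 0 \pmod p$ for $0 \leq l \leq p-2$ and $\sum_{j=0}^{p-1}\binom{-j}{p-1} \equiv 1 \pmod p$ to exactly $\lambda\beta_0(0)$, so the total vanishes.

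The general combinatorial input for parts (2) and (3) is that $\sum_{j=0}^{p-1}(-j)^k\binom{-j}{l} \equiv 0 \pmod p$ whenever $k + l \leq p-2$, since the summand is then a polynomial in $j$ of degree $\leq p-2$ and $\sum_{j=0}^{p-1} j^d \equiv 0 \pmod p$ for $d \leq p-2$. Expanding $(1+X)^{-j} = \sum_l \binom{-j}{l} X^l$ inside the central formula gives
\[ \sum_{j \in J}(-j)^k\theta(\smat{p & j \\ 0 & 1}^{-1} \star y) = \lambda(-1)^r\sum_{s=0}^{p-1-r}(-1)^s\sum_{l=0}^{s}[X^{s-l}]\beta_0 \cdot \sum_{j=0}^{p-1}(-j)^k\binom{-j}{l}, \]
and for $0 \leq k \leq r-1$ we have $l \leq s \leq p-1-r$, so $k + l \leq p-2$ and every inner sum vanishes, proving (2).

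For (3), the first summand contributes $(\sum_i \lambda_i i^r)\alpha_0(0)$. For the second, I would apply in sequence $\smat{1 & -i \\ 0 & 1}$ (multiplying each $y_n$ by $(1+X)^{-p^n i}$), $\smat{1 & 0 \\ 0 & p}$ (shifting via $\psi$), and then the Hecke machinery; this feeds $\alpha_0$ into the formula through $\beta''_0 X^r = -\lambda^{-1}\psi(X^{(r+1)(p-1)}(1+X)^{-i}\alpha_0)$. With $k = r$, only the top-degree term $l = p-1-r$ survives in the inner sum over $j$ (by Fermat), and expanding $\binom{-i}{l}$ as a polynomial of degree $l$ in $i$ shows that the hypothesis $\sum_i \lambda_i i^\ell = 0$ for $\ell < r$ kills all $l < r$ contributions; the surviving $l = r$ term evaluates to $\frac{(-1)^r}{(p-1-r)!\, r!}(\sum_i\lambda_i i^r)\alpha_0(0)$. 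The final cancellation against $(\sum_i\lambda_i i^r)\alpha_0(0)$ reduces to the identity $r!(p-1-r)! \equiv (-1)^{r+1} \pmod p$, which follows from Wilson's theorem combined with $\binom{p-1}{r} \equiv (-1)^r \pmod p$. The main obstacle is the bookkeeping in (3), especially verifying this Wilson-type cancellation and tracking the explicit top-degree coefficient through the double $\psi$ and double $(1+X)$-twist.
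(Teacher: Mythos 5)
Your proposal is correct and follows essentially the same route as the paper: the same matrix decompositions reducing everything to $\theta$ of explicit expressions $\lambda^2\psi((1+X)^{-j}\cdot)$, the same use of Corollaire~\ref{actpsi} to track the $e$-component, the vanishing of the power sums $\sum_{j} j^d$ for $d\leq p-2$, and the Wilson-theorem cancellation $r!\,(p-1-r)!=(-1)^{r+1}$ in part (3). The only cosmetic difference is that you keep $J$ arbitrary and expand $(1+X)^{-j}$ via binomial coefficients, where the paper first normalises $J=\{0,-1,\dots,-(p-1)\}$ to get the closed form $\sum_m (1+X)^m = X^{p-1}$.
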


\begin{proof}
Pour montrer le (1), il faut vérifier que :
\[ \theta\left(\pmat{1 & 0\\0 & p^{-1}} \star y + \sum_{j \in J} \pmat{p^{-1} & -j p^{-1} \\ 0 & 1} \star y\right)=0 \] 
quel que soit $y \in \projlim_\psi \ddiese(W)$. En utilisant le fait que :
\[ \pmat{p^{-1} & -j p^{-1} \\ 0 & 1} = \pmat{p^{-1} & 0 \\ 0 & p^{-1}} \pmat{1 & 0 \\ 0 & p} \pmat{1 & -j \\ 0 & 1} \] 
et les formules donnant l'action de $\B$ sur $y \in \projlim_\psi \ddiese(W)$, on se ramène à montrer que :
\[ \theta\left(\psi^{-1}(y) + \lambda^2 \sum_{j \in J} \psi( (1+X)^{-j} y)\right)=0. \]
Le fait que $\theta((1+X)^k z)=\theta(z)$ si $k \in \Zp$ et $z \in \projlim_\psi \ddiese(W)$ montre que la valeur de $\theta(\psi^{-1}(y) + \lambda^2 \sum_{j \in J} \psi( (1+X)^{-j} y))$ est inchangée si l'on remplace $J$ par un autre système de représentants de $\Fp$ dans $\Zp$ et on choisit $J=\{0,-1,\hdots,-(p-1)\}$. Comme on a alors $\sum_{j \in J} (1+X)^{-j}=X^{p-1}$, on s'est ramené à montrer que $\theta(\psi^{-1}(y) + \lambda^2 \psi( X^{p-1} y))=0$. En écrivant $y_1=\alpha_1 e + X^r \beta_1 f$ et en posant $y=\psi(\psi^{-1}(y))$, on trouve que $\theta(\psi^{-1}(y) + \lambda^2 \psi( X^{p-1} y))= \alpha_1(0) - \psi(X^{p-1} \psi (X^{p-1} \alpha_1))(0)$ et c'est un petit exercice de vérifier que $\psi(X^{p-1} \psi (X^{p-1} \alpha_1))(0) = \alpha_1(0)$.

Pour montrer le (2), il faut vérifier que :
\[ \theta\left( \sum_{j \in J} (-j)^k \pmat{p^{-1} & -j p^{-1} \\ 0 & 1} \star y\right)=0 \] 
pour tout $k \in \{ 0,\hdots,r-1\}$, et comme pour le cas $r=0$, on se ramène à montrer que $\theta(\psi(\sum_{j \in J} (-j)^k (1+X)^{-j} y)) = 0$. Comme ci-dessus, la valeur de cette expression ne dépend du choix de $J$ et on prend $J=\{0,-1,\hdots,-(p-1)\}$. En écrivant $y_0 = \alpha_0 e+ X^r \beta_0 f$, on se ramène à montrer que la série $\psi(X^r \beta_0 \sum_{j =0}^{p-1} j^k (1+X)^j)$ est nulle en $0$. Le coefficient de $X^t$ dans $\sum_{j=0}^{p-1} j^k (1+X)^j$ est $ \sum_{j=0}^{p-1} j^k \binom{j}{t}$ et cette somme est nulle (dans $\Fp$) tant que $k+t \leq p-1$ (encore un exercice) ce qui fait que $X^r \beta_0 \sum_{j =0}^{p-1} j^k (1+X)^j$ est divisible par $X^p$ si $k \leq r-1$.

Pour montrer le (3), il faut vérifier que :
\[ \theta \left( \sum_{i=0}^{p-1} \lambda_i i^r y + \sum_{j \in J} \sum_{i=0}^{p-1} (-j)^r  \lambda_i \pmat{p^{-1} & -j p^{-1} \\ 0 & 1} \pmat{1 & 0 \\ 0 & p} \pmat{1 & -i \\ 0 & 1} \star y \right) =0, \]
et comme ci-dessus, on se ramène à montrer que $\theta(z)=0$ où :
\[ z  = \sum_{i=0}^{p-1} \lambda_i i^r y + \lambda^2 \psi\left(\sum_{j \in J} (-j)^r (1+X)^{-j}  \psi\left(\sum_{i=0}^{p-1} \lambda_i (1+X)^{-i} y\right) \right). \]
Le premier terme non nul de la série $\sum_{j \in J} (-j)^r (1+X)^{-j}$ est $-X^{p-1-r} / (p-1-r)!$ et le fait que l'on a $\sum_{i=0}^{p-1} i^{\ell} \lambda_i = 0$ pour tout $0 \leq \ell \leq r-1$ implique que tous les termes de la série $\sum_{i=0}^{p-1} \lambda_i (1+X)^{-i}$ sont nuls jusqu'à $X^r$ dont le coefficient vaut $(-1)^r (\sum_{i=0}^{p-1} i^r \lambda_i) / r!$. Le théorème de Wilson implique que $r! \cdot (p-1-r)! = (-1)^{r+1}$ dans $\Fp$ et un calcul semblable à celui du (1) montre que si l'on écrit $y=\alpha e + \beta X^r f$, alors le coefficient de $e$ dans $z$ est $\sum_{i=0}^{p-1} \lambda_i i^r (\alpha - \psi((X^{p-1} + O(X^p)) \psi((X^{p-1} + O(X^p)) \alpha)))$ qui est bien nul en $X=0$.
\end{proof}

\section{Démonstration de l'isomorphisme}

Ce {\S} est consacré à la démonstration du théorème A de l'introduction. Rappelons que $\Sym^r k^2$ est l'ensemble des polynômes homogènes en $x$ et $y$ de degré $r$ à coefficients dans $k$ muni de l'action de $\K$ donnée par $\smat{a & b \\ c & d} P(x,y) = P(ax+cy,bx+dy)$ et qu'on étend l'action de $\K$ à $\K\Z$ en décidant que $\smat{p & 0 \\ 0 & p} P(x,y) = P(x,y)$. Afin de montrer le théorème A de l'introduction, il reste à faire le lien entre les représentations $\ind_{\B \cap \K \Z}^{\B} (\omega^r \otimes 1)$ et $\ind_{\K \Z}^{\G} \Sym^r k^2$. Rappelons à cet effet que l'on dispose de la décomposition d'Iwasawa $\G=\B\K$ qui a pour conséquence que si $V$ est une représentation de $\K\Z$, alors l'application {\og restriction à $\B$ \fg} de $\ind_{\K\Z}^{\G} V$ vers $\ind_{\B \cap \K\Z}^{\B} V$ est un isomorphisme. 

Par ailleurs, la représentation $\omega^r \otimes 1$ est une sous-$\B$-représentation de $\Sym^r k^2$ (elle se réalise sur l'espace engendré par $x^r$) et on en déduit une application $\ind_{\B \cap \K\Z}^{\B} \omega^r \otimes 1 \to \ind_{\B \cap \K\Z}^{\B} \Sym^r k^2$. Rappelons que $T$ désigne l'opérateur de Hecke défini dans \cite{BL} et \cite{BL2}.

\begin{prop}\label{isombor}
La représentation $(\ind_{\B \cap \K\Z}^{\B} 1) / T$ est irréductible et si $r \geq 1$, alors l'application :
\[ \frac{\ind_{\B \cap \K\Z}^{\B} (\omega^r \otimes 1)}{T (\ind_{\B \cap \K\Z}^{\B} \Sym^r k^2) \cap \ind_{\B \cap \K\Z}^{\B} (\omega^r \otimes 1)}   \to \frac{\ind_{\B \cap \K\Z}^{\B} \Sym^r k^2}{T(\ind_{\B \cap \K\Z}^{\B} \Sym^r k^2)} \]
est un isomorphisme et les deux représentations sont irréductibles.
\end{prop}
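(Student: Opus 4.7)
The strategy is to reduce everything to Pa\v{s}k\={u}nas's theorem \cite{P06} that the restriction to $\B$ of a supersingular representation of $\G$ remains irreducible. The Iwasawa decomposition $\G = \B\K\Z$ recalled just above the statement provides, for every representation $V$ of $\K\Z$, a $\B$-equivariant isomorphism $\ind_{\K\Z}^{\G} V |_{\B} \stackrel{\sim}{\longrightarrow} \ind_{\B \cap \K\Z}^{\B} V$ which intertwines the natural Hecke operators $T$ on both sides. Applied to $V = \Sym^r k^2$ (and to $V = k$ for $r = 0$), this identifies the right-hand side of (2) with $\pi(r,0,1)|_{\B}$, which is irreducible by \cite{P06}; the case $r = 0$ of this identification immediately gives (1).

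For (2), I would first check that the line $k \cdot x^r \subset \Sym^r k^2$ is $\B \cap \K\Z$-stable with character $\omega^r \otimes 1$, so that the inclusion $\omega^r \otimes 1 \hookrightarrow \Sym^r k^2$ induces a $\B$-equivariant injection $\ind_{\B \cap \K\Z}^{\B}(\omega^r \otimes 1) \hookrightarrow \ind_{\B \cap \K\Z}^{\B} \Sym^r k^2$; composing with the projection modulo $T$ defines the map of the proposition. Its kernel is by construction exactly the intersection that is quotiented out on the left, so the induced map is automatically injective.

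For surjectivity, the key point is that the image contains the class of $[1, x^r]$, which is non-zero in $\pi(r,0,1)|_{\B}$; this follows from the injectivity of the canonical $\K\Z$-equivariant map $\Sym^r k^2 \hookrightarrow \pi(r,0,1)$, a standard fact about supersingular quotients coming from the description of $T$ in \cite{BL2} (the two summands of $T[1,v]$ are supported on $\K\Z$-cosets distinct from $\K\Z$ itself, so no nontrivial element of $\Sym^r k^2$ can lie in the image of $T$). The irreducibility of $\pi(r,0,1)|_{\B}$ given by \cite{P06} then forces the $\B$-submodule generated by this non-zero vector to be the whole target, which establishes surjectivity and hence the isomorphism; the left-hand representation inherits the irreducibility of the right-hand one. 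The only point requiring some care is the injectivity of $\Sym^r k^2 \hookrightarrow \pi(r,0,1)$, but it is essentially formal from the explicit shape of $T$ recalled in \cite{BL2}.
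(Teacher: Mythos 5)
Your proposal is correct and follows essentially the same route as the paper: irreducibility of the quotients is imported from Pa\v{s}k\={u}nas via the Iwasawa-decomposition identification of $\ind_{\K\Z}^{\G}\Sym^r k^2$ with $\ind_{\B\cap\K\Z}^{\B}\Sym^r k^2$, injectivity holds by construction of the quotient on the left, and surjectivity follows because the image is a non-zero subrepresentation of an irreducible one. The paper is even terser than you on why the image is non-trivial (it simply asserts it), so your extra remark that the class of $[1,x^r]$ survives modulo $T$ is a harmless elaboration, not a divergence.
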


\begin{proof}
Le fait que $(\ind_{\B \cap \K\Z}^{\B} 1) / T$ et $(\ind_{\B \cap \K\Z}^{\B} \Sym^r k^2) / T$ sont irréductibles fait l'objet du (i) du théorème 1.1 de \cite{P06} étant donné l'isomorphisme entre $\ind_{\K\Z}^{\G} \Sym^r k^2$ et $\ind_{\B \cap \K\Z}^{\B} \Sym^r k^2$ rappelé ci-dessus. Quand $r \geq 1$,  l'application donnée est injective par construction et son image est une sous-représentation non-triviale de $(\ind_{\B \cap \K\Z}^{\B} \Sym^r k^2) / T$. Cette représentation étant irréductible, l'application ci-dessus est bien un isomorphisme.
\end{proof}

Rappelons que des formules donnant l'action de $T$ sur $\ind_{\B \cap \K\Z}^{\B} \Sym^r k^2$ se trouvent dans le \S 2.2 de \cite{BR2}. On a notamment :
\begin{equation} \label{formule}
T ( [1,x^{r-i}y^i] ) =  
\begin{cases} \sum_{j \in J}  \smat{p & j \\ 0 & 1} [1, (-j)^i x^r] & \text{si $i \leq r-1$;} \\
\smat{1 & 0 \\ 0 & p} [1,y^r] + \sum_{j \in J}  \smat{p & j \\ 0 & 1} [1, (-j)^r x^r] & \text{si $i = r$;}
\end{cases} 
\end{equation}

\begin{lemm}\label{noyr}
Si $r \geq 1$, alors $T (\ind_{\B \cap \K\Z}^{\B} \Sym^r k^2) \cap \ind_{\B \cap \K\Z}^{\B} (\omega^r \otimes 1)$ est engendré par les translatés sous l'action de $\B$ des vecteurs :
\[ \begin{cases}
T([1,x^{r-i}y^i]) & \text{pour $0 \leq i \leq r-1$,} \\
T( \sum_{i=0}^{p-1} \lambda_i [ \smat{1 & p^{-1}i \\ 0 & p^{-1}} , y^r ]) & \text{où $\sum_{i=0}^{p-1} i^{\ell} \lambda_i = 0$ pour tout $0 \leq \ell \leq r-1$}.
\end{cases} \]
\end{lemm}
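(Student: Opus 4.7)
The plan is to determine, for a general $z \in \ind_{\B \cap \K\Z}^{\B} \Sym^r k^2$, under what conditions $T(z)$ lies in $\ind_{\B \cap \K\Z}^{\B}(\omega^r \otimes 1)$, and then apply $T$ to read off generators of the intersection. First I would write $z = \sum_{g, i} a_{g,i}[g, x^{r-i}y^i]$ in a basis indexed by coset representatives $g$ of $\B/(\B \cap \K\Z)$ and by $0 \leq i \leq r$. The formulas in~(\ref{formule}), combined with the $\B$-equivariance of $T$, show that modulo $\ind_{\B \cap \K\Z}^{\B}(\omega^r \otimes 1)$ only the $y^r$-components survive: indeed $T([g, x^{r-i}y^i])$ already lies in $\ind_{\B \cap \K\Z}^{\B}(\omega^r \otimes 1)$ for $i < r$, while $T([g, y^r]) \equiv g\smat{1 & 0 \\ 0 & p}[1, y^r]$ modulo that subspace. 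Hence the question reduces to: when does $\sum_g a_{g,r}\, g\smat{1 & 0 \\ 0 & p}[1, y^r]$ belong to $\ind_{\B \cap \K\Z}^{\B}(\omega^r \otimes 1)$?

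The key technical step is a coset computation. Right multiplication by $\smat{1 & 0 \\ 0 & p}$ is \emph{not} well-defined on $\B/(\B \cap \K\Z)$, so distinct cosets $g$ can send $g\smat{1 & 0 \\ 0 & p}$ into the same right coset. I would show that the preimage of the trivial coset under this map consists of exactly $p$ cosets of $\B/(\B\cap\K\Z)$, represented by $g_i = \smat{1 & p^{-1}i \\ 0 & p^{-1}}$ for $i = 0,\ldots,p-1$, with $g_i\smat{1 & 0 \\ 0 & p} = \smat{1 & i \\ 0 & 1}$. (The count follows from the fact that the stabilizer of $\smat{1 & 0 \\ 0 & p^{-1}}(\B \cap \K\Z)$ under left multiplication by $\B \cap \K\Z$ is the Iwahori subgroup, of index $p$ in $\B \cap \K\Z$.) Any other coset is handled by translation under $\B$. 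The contribution from the trivial coset is then $[1, \sum_i \lambda_i (ix+y)^r]$ with $\lambda_i = a_{g_i, r}$; expanding in the basis $x^\ell y^{r-\ell}$ of $\Sym^r k^2$ and using that $\binom{r}{\ell} \in k^\times$ (since $r \leq p-1$), this lies in $k \cdot [1, x^r]$ precisely when $\sum_i \lambda_i i^\ell = 0$ for every $0 \leq \ell \leq r-1$.

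Combining these two steps, any $z$ with $T(z) \in \ind_{\B \cap \K\Z}^{\B}(\omega^r \otimes 1)$ decomposes as a $\B$-linear combination of elements $[1, x^{r-i}y^i]$ for $0 \leq i \leq r-1$ (which automatically satisfy $T([1, x^{r-i}y^i]) \in \ind_{\B \cap \K\Z}^{\B}(\omega^r \otimes 1)$) together with combinations $\sum_i \lambda_i [\smat{1 & p^{-1}i \\ 0 & p^{-1}}, y^r]$ subject to the vanishing conditions $\sum_i \lambda_i i^\ell = 0$ for $0 \leq \ell \leq r-1$. Applying $T$ and using $\B$-equivariance yields exactly the two families of generators in the statement. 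The main obstacle is the coset enumeration, that is, correctly identifying the representatives $g_i$ and verifying that there are $p$ of them; once this is in hand, the remaining combinatorial identities are of the same flavour as those already carried out in the proof of Proposition~\ref{heckesurnul}.
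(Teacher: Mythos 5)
Your proposal is correct and follows essentially the same route as the paper: the paper also reduces modulo $\ind_{\B \cap \K\Z}^{\B}(\omega^r \otimes 1)$ to the $y^r$-components, identifies the $p$ cosets $\smat{1 & p^{-1}i \\ 0 & p^{-1}}(\B\cap\K\Z)$ sitting above a given coset (via the explicit decomposition $\B = \coprod_{\beta,\delta} \smat{1 & \beta \\ 0 & p^\delta}(\B\cap\K\Z)$ and the self-similarity $A = \coprod_i p^{-1}A + p^{-1}i$, which is your fiber count in different clothing), and extracts the conditions $\sum_i \lambda_i i^\ell = 0$ from $\sum_i \lambda_i (ix+y)^r \in k\cdot x^r$. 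One small wording point: the map $g(\B\cap\K\Z) \mapsto g\smat{1&0\\0&p}(\B\cap\K\Z)$ \emph{is} well defined, since $\smat{1&0\\0&p}^{-1}(\B\cap\K\Z)\smat{1&0\\0&p} \subset \B\cap\K\Z$; what you actually use, and correctly, is that it is $p$-to-one rather than injective.
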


\begin{proof}
La formule (\ref{formule}) ci-dessus implique que $T([1,x^{r-i}y^i]) \in \ind_{\B \cap \K\Z}^{\B} (\omega^r \otimes 1)$ et donc de même pour les translatés de ces vecteurs. Il reste donc à déterminer quand est-ce qu'un vecteur du type $T(\sum_j [b_j, \lambda_j y^r])$ appartient à $\ind_{\B \cap \K\Z}^{\B} (\omega^r \otimes 1)$. Pour cela, soit $A=\{ \alpha_n p^{-n} + \cdots + \alpha_1 p^{-1}$ où $0 \leq \alpha_j \leq p-1 \}$ ce qui fait que $A$ est un système de représentants de $\Qp / \Zp$ et que l'on a :
\[ \B = \coprod_{\substack{\beta \in A \\ \delta\in \ZZ}} \pmat{ 1 & \beta \\ 0 & p^\delta} \B \cap \K\Z, \]
comme le montre un petit calcul. On pose $b_{\beta,\delta} = \smat{ 1 & \beta \\ 0 & p^\delta}$ et on se donne un vecteur $v$ de la forme $\sum_{\beta,\delta} \sum_{i=0}^{p-1} \lambda_{\beta,\delta,i} [b_{p^{-1} \beta + p^{-1} i ,\delta}, y^r]$ (remarquons que $A = \coprod_{i=0}^{p-1} p^{-1} A + p^{-1} i$). 
On a alors :
\[ T(v) = \sum_{\beta,\delta} b_{\beta,\delta+1} \cdot T \left( \lambda_{\beta,\delta,0} [ \smat{1 & 0 \cdot p^{-1} \\ 0 & p^{-1}},y^r] + \cdots + \lambda_{\beta,\delta,p-1} [ \smat{1 & (p-1) \cdot p^{-1} \\ 0 & p^{-1}},y^r] \right), \]
ce qui fait que l'ensemble des vecteurs $v$ tels que $T(v) \in \ind_{\B \cap \K\Z}^{\B} (\omega^r \otimes 1)$ est engendré par les translatés sous l'action de $\B$ des $v_\lambda =  \sum_{i=0}^{p-1} \lambda_i [ \smat{1 & p^{-1}i \\ 0 & p^{-1}} , y^r ]$ tels que $T(v_\lambda) \in \ind_{\B \cap \K\Z}^{\B} (\omega^r \otimes 1)$. La formule (\ref{formule}) montre que c'est le cas si et seulement si $\sum_{i=0}^{p-1} \lambda_i [ \smat{1 & i \\ 0 & 1} , y^r ] \in \ind_{\B \cap \K\Z}^{\B} (\omega^r \otimes 1)$ et donc si et seulement si $\sum_{i=0}^{p-1} \lambda_i (ix+y)^r \in k \cdot x^r$ ce qui est équivalent aux conditions $\sum_{i=0}^{p-1} i^{\ell} \lambda_i = 0$ pour tout $0 \leq \ell \leq r-1$.
\end{proof}

\begin{prop}\label{enghecke}
Soit $J = \{ j_0,\hdots,j_{p-1}\}$ un ensemble d'éléments de $\Zp$ tels que $j_i = i \mod{p}$ pour tout $i$. Si $r=0$, alors $T(\ind_{\B \cap \K\Z}^{\B} 1)$ est engendré par les translatés de :
\[ \pmat{1 & 0\\0 & p}  [1,1] + \sum_{j \in J} \pmat{p & j \\ 0 & 1} [1,1]  \] et si $r \geq 1$, alors $T (\ind_{\B \cap \K\Z}^{\B} \Sym^r k^2) \cap \ind_{\B \cap \K\Z}^{\B} (\omega^r \otimes 1)$ est engendré par les translatés sous l'action de $\B$ des  :
\[ \sum_{j \in J} (-j)^i \pmat{p & j \\ 0 & 1} [1,x^r],  \]
pour $0 \leq i \leq r-1$ et des :
\[ \sum_{i=0}^{p-1} \lambda_i i^r [1,x^r] + \sum_{i=0}^{p-1} \lambda_i \pmat{1 & p^{-1} i \\ 0 & p^{-1}} \sum_{j \in J} (-j)^r \pmat{p & j \\ 0 & 1} [1,x^r], \]
où $\lambda_0,\hdots,\lambda_{p-1}$ sont tels que $\sum_{i=0}^{p-1} i^{\ell} \lambda_i = 0$ pour tout $0 \leq \ell \leq r-1$.
\end{prop}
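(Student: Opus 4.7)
Je vais prouver cette proposition par calcul direct \`a partir des g\'en\'erateurs fournis par le lemme \ref{noyr}, en utilisant la $\B$-\'equivariance de $T$ et la formule explicite (\ref{formule}).

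Pour le cas $r = 0$, on observe que les translat\'es sous $\B$ de $[1,1]$ engendrent $\ind_{\B \cap \K\Z}^{\B} 1$, si bien que les translat\'es de $T([1,1])$ engendrent $T(\ind_{\B \cap \K\Z}^{\B} 1)$; on applique alors la seconde ligne de (\ref{formule}) avec $i = r = 0$ pour trouver exactement le vecteur annonc\'e.

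Pour le cas $r \geq 1$, le lemme \ref{noyr} fournit deux familles de g\'en\'erateurs dont il faut calculer l'image par $T$. La premi\`ere, $T([1,x^{r-i}y^i])$ pour $0 \leq i \leq r-1$, est imm\'ediatement donn\'ee par la premi\`ere ligne de (\ref{formule}) sous la forme $\sum_{j \in J} (-j)^i \smat{p & j \\ 0 & 1}[1,x^r]$ annonc\'ee. Pour la seconde famille, celle des $T(\sum_i \lambda_i [\smat{1 & p^{-1}i \\ 0 & p^{-1}}, y^r])$ avec $\sum_i i^{\ell} \lambda_i = 0$ pour $\ell < r$, on sort $T$ de la somme par $\B$-\'equivariance et on applique la seconde ligne de (\ref{formule}) \`a $[1,y^r]$. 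Le premier morceau devient alors $\sum_i \lambda_i \smat{1 & p^{-1}i \\ 0 & p^{-1}} \smat{1 & 0 \\ 0 & p}[1,y^r]$; l'identit\'e matricielle $\smat{1 & p^{-1}i \\ 0 & p^{-1}} \smat{1 & 0 \\ 0 & p} = \smat{1 & i \\ 0 & 1}$, combin\'ee \`a l'identification $[b,v] = [1, b \cdot v]$ valable pour $b \in \B \cap \K\Z$, le r\'e\'ecrit comme $[1, \sum_i \lambda_i (ix+y)^r]$, qu'un d\'eveloppement binomial et les conditions d'annulation sur les $\lambda_i$ r\'eduisent \`a $(\sum_i \lambda_i i^r)[1,x^r]$. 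Le second morceau appara\^it d\'ej\`a sous la forme voulue $\sum_i \lambda_i \smat{1 & p^{-1}i \\ 0 & p^{-1}} \sum_j (-j)^r \smat{p & j \\ 0 & 1}[1,x^r]$.

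Le point clef est cette simplification matricielle, conjugu\'ee \`a l'utilisation des relations $\sum_i i^{\ell} \lambda_i = 0$ pour \'eliminer les termes mixtes dans le d\'eveloppement de $(ix+y)^r$. Il n'y a pas r\'eellement d'obstacle conceptuel: tout le travail consiste \`a bien appliquer la formule (\ref{formule}) en respectant les conventions sur les symboles $[g,v]$ et l'action de $\K\Z$ sur $\Sym^r k^2$.
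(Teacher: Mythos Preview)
Your proof is correct and follows essentially the same route as the paper's: the case $r=0$ is identical, and for $r\geq 1$ you invoke the generators from lemma~\ref{noyr}, apply formula~(\ref{formule}), and use the matrix identity $\smat{1 & p^{-1}i \\ 0 & p^{-1}}\smat{1 & 0 \\ 0 & p}=\smat{1 & i \\ 0 & 1}$ together with $[\smat{1 & i \\ 0 & 1},y^r]=[1,(ix+y)^r]$ and the vanishing conditions on the $\lambda_i$ to reduce the first piece to $(\sum_i \lambda_i i^r)[1,x^r]$, exactly as the paper does. The only cosmetic difference is that you factor $T$ out explicitly via $\B$-equivariance before applying~(\ref{formule}), whereas the paper writes down the result of this computation directly.
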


\begin{proof}
Comme $\ind_{\B \cap \K\Z}^{\B} 1$ est engendrée par les translatés de $[1,1]$ sous l'action de $\B$, le sous-espace $T(\ind_{\B \cap \K\Z}^{\B} 1)$ est engendré par les translatés de $T([1,1]) = \smat{1 & 0\\0 & p}  [1,1] + \sum_{j \in J} \smat{p & j \\ 0 & 1} [1,1]$ ce qui montre le premier point.

Si $r \geq 1$, alors la formule (1) nous dit que $T([1,x^{r-i}y^i]) =  \sum_{j \in J}  \smat{p & j \\ 0 & 1} [1, (-j)^i x^r]$ pour $i \leq r-1$ et que :
\[ T\left( \sum_{i=0}^{p-1} \lambda_i [ \smat{1 & p^{-1}i \\ 0 & p^{-1}} , y^r ]\right) = \sum_{i=0}^{p-1} \lambda_i [\smat{1 & i \\ 0 & 1},y^r] + \sum_{i=0}^{p-1} \lambda_i \smat{1 & p^{-1} i \\ 0 & p^{-1}} \sum_{j \in J} (-j)^r \smat{p & j \\ 0 & 1} [1,x^r]. \]
La condition $\sum_{i=0}^{p-1} i^{\ell} \lambda_i = 0$ pour tout $0 \leq \ell \leq r-1$ implique que $\sum_{i=0}^{p-1} \lambda_i [\smat{1 & i \\ 0 & 1},y^r] = \sum_{i=0}^{p-1} \lambda_i i^r [1,x^r]$ et le lemme \ref{noyr} permet alors de conclure.
\end{proof}

\begin{proof}[Démonstration du théorème A]
Rappelons que l'on a construit au \S 2 une application $\pi_W : \ind_{\B \cap \K \Z}^{\B} (\omega^r \otimes 1) \otimes (\chi \circ \det) \to \Omega(W)$, déterminée par $\pi_W([1,x^r]) = \theta$ (où l'on identifie $(\omega^r \otimes 1) \otimes (\chi \circ \det)$ à la sous-représentation de $\Sym^r k^2  \otimes (\chi \circ \det)$ engendrée par $x^r$). Etant donné que la représentation $\Omega(W)$ est irréductible, et en vertu des isomorphismes entre représentations irréductibles fournis par la proposition \ref{isombor}, il ne reste plus qu'à montrer que l'image par $\pi_W$ de $T(\ind_{\B \cap \K\Z}^{\B} 1) \otimes (\chi \circ \det)$ (si $r=0$) ou de 
$T (\ind_{\B \cap \K\Z}^{\B} \Sym^r k^2) \otimes (\chi \circ \det) \cap \ind_{\B \cap \K\Z}^{\B} (\omega^r \otimes 1) \otimes (\chi \circ \det)$ (si $r \geq 1$) est nulle, ce qui suit directement de la proposition \ref{enghecke} et de la proposition \ref{heckesurnul}.
\end{proof}

\vspace{20pt}\noindent \textbf{Remerciements} : je remercie Christophe Breuil de m'avoir demandé (en 2005!) une démons\-tration directe de l'isomorphisme du théorème A ainsi que pour les nombreuses discussions que nous avons eues sur {\og Langlands $p$-adique \fg}.

\end{document}